\newtheorem{thm}{Theorem}
\newtheorem{lemma}[thm]{Lemma}
\newtheorem{cor}[thm]{Corollary}
\theoremstyle{definition}
\newtheorem{example}[thm]{Example}
\newtheorem{notation}[thm]{Notation}
\newtheorem{remark}[thm]{Remark}
\newtheorem{question}[thm]{Question}
\newtheorem{conj}{Conjecture}
\newcommand{\Hom}{\mathrm{Hom}}
\newcommand{\Ext}{\mathrm{Ext}}
\newcommand{\Hilb}{\mathrm{Hilb}}
\newcommand{\gr}{\mathrm{gr}}
\renewcommand{\AA}{\mathbb{A}}
\newcommand{\N}{\mathbb{N}}
\newcommand{\Z}{\mathbb{Z}}
\newcommand{\kk}{{\Bbbk}}
\begin{document}

\author[R.\,Ramkumar, A. \,Sammartano]{Ritvik~Ramkumar and Alessio~Sammartano}
\address{(Ritvik Ramkumar) Department of Mathematics\\Cornell University\\Ithaca, NY\\USA}
\email{ritvikr@cornell.edu}
\address{(Alessio Sammartano) Dipartimento di Matematica \\ Politecnico di Milano \\ Milan \\ Italy}
\email{alessio.sammartano@polimi.it}

\title{On the parity conjecture for Hilbert schemes of points on threefolds}

\subjclass[2020]{Primary: 14C05; Secondary: 13D07, 13P10.}

\begin{abstract} 
Let $\Hilb^d(\AA^3)$ be the Hilbert scheme of $d$ points in $\AA^3$, 
and let $T_z$ denote the tangent space to a point $z\in \Hilb^d(\AA^3)$.
Okounkov and Pandharipande have conjectured that  $\dim T_z$ and $d$ have the same parity for every $z$.
For points  $z$ parametrizing monomial ideals, the conjecture was proved by Maulik, Nekrasov, Okounkov, and Pandharipande.
In this paper, we settle the conjecture for points $z$ parametrizing homogeneous ideals.
In fact, we state a generalization of the problem to Quot schemes of $\AA^3$, 
and we settle it for points parametrizing graded modules.
\end{abstract}

\maketitle

\section*{Introduction}

The Hilbert scheme of $d$ points on $\AA^n$, denoted by $\Hilb^d(\AA^n)$, 
parameterizing closed zero-dimensional subschemes of degree $d$, 
was constructed by Grothendieck \cite{Grothendieck}. 
In the case when $n=2$, the Hilbert scheme is a beautiful space: 
it is a smooth variety of dimension $2d$ \cite{Fogarty}, and understanding its geometry has always been of significant interest. 
For some of the main highlights of the theory of Hilbert schemes of points in dimension two, 
and its many connections to other fields of mathematics, 
we refer the reader to \cite{Gottsche,Haiman,IarrobinoBook,NakajimaBook}.

By contrast, if $n\geq 3$, then  $\Hilb^d(\AA^n)$ becomes considerably more pathological. 
It is  singular as soon as $d \geq 4$, and it has many irreducible components if $d \gg 0$.
Moreover, recent breakthrough \cite{Jelisiejew} shows that it  can have arbitrarily bad singularities for sufficiently large $n$, in particular, it can be non-reduced.

However, when $n=3$, while still being singular and reducible, 
the Hilbert scheme 
$\Hilb^d(\AA^3)$ is rather special.
For instance,  it admits a superpotential description,
that is,  it can be realized as the critical locus of a regular function on a smooth variety \cite{BBS}.
This 
 suggests that  $\Hilb^d(\AA^3)$ may still be a well-behaved space.
Over the past two decades, there has been a lot of interest in $\Hilb^d(\AA^3)$,
especially in the field of enumerative geometry.
Of particular importance is its role in the
seminal work of Maulik, Nekrasov, Okounkov, and Pandharipande \cite{MNOP}, connecting  Gromov-Witten theory and Donaldson-Thomas theory for threefolds. 
In the process of developing this correspondence, 
 Okounkov and Pandharipande observed  the following parity phenomenon (see e.g. \cite{PandharipandeLecture}).

\begin{conj}[Okounkov-Pandharipande, 2006]\label{ConjOP}
For all $z\in \Hilb^d(\AA^3)$, we have 
$$\dim T_z \equiv d \pmod{2}.$$
\end{conj}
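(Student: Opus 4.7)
The plan is to reduce the tangent-space dimension to that of a single $\Ext$ group and then extract the parity from a character identity coming from Calabi--Yau Serre duality on $\AA^3$.

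First, I would carry out a reduction step. Write $R = \kk[x,y,z]$ and $A = R/I$. Applying $\Hom_R(-,A)$ to $0 \to I \to R \to A \to 0$, and using the elementary identification $\Hom_R(A,A) \cong A$ (every endomorphism of the cyclic $R$-module $A$ is a multiplication), I obtain a graded isomorphism $T_{[I]} = \Hom_R(I, A) \cong \Ext^1_R(A, A)$. The conjecture therefore becomes: for every graded Artinian quotient $A = R/I$,
\[
\dim_\kk \Ext^1_R(A, A) \equiv \dim_\kk A \pmod 2.
\]

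Second, denoting by $h_A(t)$ the Hilbert series of $A$ and by $H^i(t)$ that of $\Ext^i_R(A, A)$, I would compute the Laurent polynomial $\chi(t) := \sum_{i=0}^3 (-1)^i H^i(t)$ from a Koszul resolution, obtaining
\[
\chi(t) \;=\; -t^{-3}\, h_A(t)\, h_A(t^{-1})\, (1-t)^3.
\]
In parallel, graded equivariant Serre duality on the Calabi--Yau threefold $\AA^3$ (where $\omega_{\AA^3}$ carries $\mathbb{G}_m$-weight $3$ under the diagonal scaling) gives $H^i(t) = t^{-3}\, H^{3-i}(t^{-1})$. Combined with $H^0(t) = h_A(t)$, these ingredients yield
\[
H^1(t) - t^{-3}\, H^1(t^{-1}) \;=\; h_A(t) - t^{-3}\, h_A(t^{-1}) + t^{-3}\, h_A(t)\, h_A(t^{-1})\, (1-t)^3.
\]

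Third, I would extract the parity by specializing to $t = -1$: the factor $(1-(-1))^3 = 8$ forces the quadratic term to vanish modulo $2$, and the identity collapses to $H^1(-1) \equiv h_A(-1) \pmod 2$. For any Laurent polynomial $P(t) = \sum_j a_j t^j$ one has $P(1) - P(-1) = 2 \sum_{j \text{ odd}} a_j \equiv 0 \pmod 2$, so applying this to both $H^1$ and $h_A$ and chaining the congruences gives
\[
\dim_\kk T_{[I]} = H^1(1) \equiv H^1(-1) \equiv h_A(-1) \equiv h_A(1) = d \pmod 2.
\]

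The most delicate step is the graded equivariant Serre duality: the weight shift must be tracked as exactly $3$, since that dimension is precisely what produces the factor $(1-(-1))^3 = 8$ unlocking the mod-$2$ argument. The Quot-scheme generalization promised in the abstract should follow from the same skeleton, with $A$ replaced by a graded quotient $M$ of a free module and the tangent parity reduced to $\dim \Ext^1_R(M, M) \equiv \dim \Hom_R(M, M) \pmod 2$ via the identical character identity.
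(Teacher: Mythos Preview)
Your argument is correct for the graded case, which is exactly what the paper proves, and it takes a genuinely different route. The paper degenerates the graded module $M$ to its monomial initial module $M'$ via a Gr\"obner weight, then controls the change in $\dim\Ext^i$ through a spectral sequence of consecutive cancellations (Lemma~\ref{LemmaCancellations}); combining this with the same Serre duality you use (Lemma~\ref{LemmaDuality}), the paper shows the discrepancy $hom(K',M')-hom(K,M)=\sum_j\delta_{1,j}$ is even, and finally invokes the MNOP monomial case (via Corollary~\ref{CorollaryMonomialCase}) to conclude. Your proof bypasses all of this: the Hilbert-series identity $\chi(t)=-t^{-3}h_A(t)h_A(t^{-1})(1-t)^3$ together with the duality $H^i(t)=t^{-3}H^{3-i}(t^{-1})$ and the specialization $t=-1$ extracts the parity directly, with no degeneration, no spectral sequence, and---notably---no appeal to the previously established monomial case. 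Your approach is shorter and more self-contained; the paper's approach, on the other hand, gives finer structural information (the graded cancellation pattern $\delta_{1,j}=\delta_{1,-j-3}$) and extends verbatim to the non-standard gradings of Remark~\ref{RemarkGradings}, where your $t=-1$ trick would need to be reworked. Your sketch of the Quot extension is also correct: the same character identity yields $H^0(-1)-H^1(-1)=4h_M(-1)^2$, hence $ext^1(M,M)\equiv hom(M,M)\pmod 2$, which combined with $hom(K,M)=ext^1(M,M)+rd-hom(M,M)$ gives Theorem~\ref{TheoremParityModules}.
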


This is a somewhat bizarre statement:
in general, there is no reason to expect 
a    scheme  to have (regular) jumps in tangent space dimensions.
It might be a manifestation of some  hidden structure of points in $\AA^3$. 
For instance, $d$ has the same parity as $3d$, which is the dimension of the smoothable component;
thus, Conjecture \ref{ConjOP}
suggests  that the tangent spaces to points away from the smoothable component may still have some relation to the smoothable component. 
We emphasize that this is a  phenomenon special for threefolds:
clearly, the dimension of  tangent spaces  to $\Hilb^d(\AA^2)$ is constant,
whereas  tangent spaces to
$\Hilb^d(\AA^n)$ do not exhibit any  noteworthy pattern for $n \geq 4$,
see Remark \ref{RemarkExamplesHilbN}.

For points $z$ parametrizing  monomial ideals, the conjecture was proved by Maulik, Nekrasov, Okounkov, and Pandharipande in \cite[Theorem 2]{MNOP}.
Specifically,
 the monomial case was a crucial ingredient in  calculating the partition function of the Donaldson-Thomas theory of $\AA^3$ in terms of the MacMahon function for 3-dimensional partitions.
 See also  \cite[Lemma 4.1 (c)]{BehrendFantechi} and \cite[Corollary 2.5]{RS}.
 
The goal of this paper is to  settle a more general case of  the conjecture:

\begin{thm}\label{TheoremMain}
Conjecture \ref{ConjOP} holds for all points $z$ parameterizing homogenous ideals.
\end{thm}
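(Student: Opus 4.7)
Set $M := R/I$ with $R = \kk[x_1,x_2,x_3]$ and $E_i := \dim_{\kk}\Ext^i_R(M,M)$. The plan is to reduce the theorem to the parity congruence $E_0 \equiv E_1 \pmod{2}$, then prove this by computing $\sum_i(-1)^i e_i(-1)$ (the Euler characteristic of the Hilbert series of the $\Ext$-modules at $t=-1$) in two different ways: once from the minimal graded free resolution of $M$, and once from Serre duality on the local Calabi--Yau threefold $\AA^3$. The comparison will produce an integer divisible by $4$, which is what makes the whole argument work.

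From the four-term exact sequence $0 \to \Hom_R(M,M) \to M \to \Hom_R(I,M) \to \Ext^1_R(M,M) \to 0$, obtained from $0 \to I \to R \to M \to 0$ together with $\Ext^1_R(R,M)=0$, one reads off $\dim T_z = d - E_0 + E_1$, so it suffices to show $E_0 \equiv E_1 \pmod 2$. Since $I$ is homogeneous, $M$ is graded Artinian and each $\Ext^i_R(M,M)$ is $\Z$-graded; put $e_i(t) := \sum_j \dim_{\kk}\Ext^i_R(M,M)_j\, t^j$, and write $H(t)$ for the Hilbert series of $M$. A standard Euler-characteristic computation on the complex $\Hom_R(F_\bullet, M)$, where $F_\bullet$ is the minimal graded free resolution of $M$, together with the resolution identity $\sum_i(-1)^i B_i(t) = (1-t)^3 H(t)$ for the Betti polynomials $B_i(t)=\sum_k \beta_{i,k}t^k$, gives
\[
\sum_{i=0}^{3}(-1)^i e_i(t) \;=\; H(t)\, H(t^{-1})\,(1-t^{-1})^3.
\]
Graded local duality on $R$ (with $\omega_R = R(-3)$) yields the symmetry $\dim\Ext^i_R(M,M)_j = \dim\Ext^{3-i}_R(M,M)_{-j-3}$, i.e., $e_i(t) = t^{-3} e_{3-i}(t^{-1})$; at $t=-1$ this becomes $e_i(-1) = -e_{3-i}(-1)$, hence $\sum_i (-1)^i e_i(-1) = 2\bigl(e_0(-1)-e_1(-1)\bigr)$.

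Comparing the two expressions at $t=-1$ produces the key equality
\[
e_0(-1) - e_1(-1) \;=\; 4\,H(-1)^2,
\]
a nonnegative multiple of $4$. Finally, for any Laurent polynomial $f$ with integer coefficients, $f(1) - f(-1) = 2\sum_{j\text{ odd}} [\text{coef of } t^j]$ is an even integer, so $E_0 - E_1 = e_0(1)-e_1(1) \equiv e_0(-1)-e_1(-1) \equiv 0 \pmod 2$, and the first paragraph gives $\dim T_z \equiv d \pmod 2$. The main delicate point of the proof is keeping careful track of the grading shift in Serre duality; the whole argument is driven by the sign $(-1)^{-3}=-1$, which is precisely the reason why the parity phenomenon depends on $\dim\AA^3$ being odd and fails on $\AA^2$ and on $\AA^n$ for $n \ge 4$. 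The same chain of identities applies to an arbitrary graded Artinian quotient of $R^n$ (with $d$ replaced by $nd$ and without the cyclicity assumption $E_0 = d$), yielding the Quot-scheme generalization mentioned in the abstract.
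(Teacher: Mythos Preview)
Your argument is correct. The reduction to $E_0\equiv E_1\pmod 2$ via the four-term sequence is the same as in the paper, but from there the two proofs diverge completely. The paper degenerates $M$ to its monomial initial module $M'$, runs a spectral sequence on the filtered $\Hom$ complex to obtain the consecutive-cancellation integers $\delta_{i,j}$, uses graded duality to show $\delta_{1,j}=\delta_{1,-j-3}$ so that $\sum_j\delta_{1,j}$ is even, and then \emph{invokes the MNOP monomial case} to finish. Your proof bypasses both the Gr\"obner degeneration and the dependence on \cite{MNOP}: the identity $\sum_i(-1)^i e_i(t)=H(t)H(t^{-1})(1-t^{-1})^3$ together with the duality $e_i(t)=t^{-3}e_{3-i}(t^{-1})$, evaluated at $t=-1$, gives the closed formula $e_0(-1)-e_1(-1)=4H(-1)^2$, and the elementary observation $f(1)\equiv f(-1)\pmod 2$ transports this to $E_0-E_1$. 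This is genuinely more elementary and self-contained; it also makes transparent why the phenomenon is special to an odd canonical shift (so fails for $\AA^2$ and $\AA^4$), and it extends verbatim to graded quotients of $F=S^r$ and to the nonstandard $\Z$-gradings of Remark~\ref{RemarkGradings} (when $\sum\deg x_i$ is odd, the factor $\prod_i(1-(-1)^{\deg x_i})$ is either $8$ or $0$, and in either case $e_0(-1)-e_1(-1)\in 4\Z$). What the paper's route buys in exchange is a finer structural statement---an explicit graded comparison of $T_z$ and $T_{z'}$ via pairwise cancellations---rather than just the parity; your Hilbert-series identity does not see the degeneration at all.
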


Homogeneous ideals are much more abundant than monomial ideals. 
Clearly, there are infinitely many of  them in $\Hilb^d(\AA^3)$, and, more significantly, 
there are points parametrizing homogeneous ideals which lie outside the smoothable component of $\Hilb^d(\AA^3)$ 
\cite{Iarrobino}.
In fact, our proof gives a larger locus where the conjecture holds, as it allows for more general gradings, see Remark \ref{RemarkGradings}.

More generally, we consider an extension of Conjecture \ref{ConjOP}  to Quot schemes of $\AA^3$,
 which asks whether the dimensions of tangent spaces have the same parity as the dimension of the principal component, see
Question \ref{QueQuot}.
Again, if this were true, it would be quite surprising,
 since it would imply, for instance, that if the vector bundle has even rank, then  all the tangent spaces have even dimension, regardless of the length of the quotients. 
As mentioned before, this phenomenon does not hold for $\AA^n$ with $n \geq 4$;
however,
and more interestingly,
it also does not occur for the (singular!) Quot schemes of $\AA^2$,
see Example \ref{ExampleQuotA2}.
Our main result is Theorem \ref{TheoremParityModules}, in which we affirmatively answer
 Question \ref{QueQuot} for points parametrizing graded modules; specializing to the line bundle $\mathcal{O}_{\AA^3}$, this gives Theorem \ref{TheoremMain}.
To prove Theorem \ref{TheoremParityModules}, we compare the tangent space  to a point $z$ parametrizing a graded module 
and the tangent space to a point $z'$ 
obtained from $z$ by a Gr\"obner degeneration.
It is well-known that, by semicontinuity, we have $\dim T_z \leq \dim T_{z'}$.
We refine this inequality and prove a more precise statement:
namely, that there is a correspondence between graded components of $T$ of odd and even degree,
such that 
$T_z$ is obtained from $T_{z'}$ by canceling pairs of subspaces of equal dimension in corresponding graded components.

While the present paper was under review, the preprint \cite{GGGL} appeared, proving that Conjecture \ref{ConjOP} is false in its full generality: 
the authors exhibit an explicit $z \in \Hilb^{12}(\AA^3)$ with odd tangent space dimension.
This counterexample suggests several questions, for example: 
Does Conjecture \ref{ConjOP} hold on a dense open subset of $\Hilb^d(\AA^3)$?
Is its failure  related to the existence of non-smoothable irreducible components or  embedded components of $\Hilb^d(\AA^3)$?
What are the connections to the questions of the constancy of the Behrend function
and of the  generic reducedness of $\Hilb^d(\AA^3)$?
See \cite{Ricolfi} for some recent results in this direction.

\section*{Proof of the main result}

Let $\kk$ denote an algebraically closed field of arbitrary characteristic.
Let $X$ be a smooth threefold over $\kk$, and let $\mathcal{F}$ be a vector bundle of rank $r$ over $X$.
For a positive integer $d$,
the  {\bf Quot scheme} $\mathrm{Quot}^d_X(\mathcal{F})$ parametrizes finite length quotients of $\mathcal{F}$ of degree $d$.
There is an irreducible component of $\mathrm{Quot}^d_X(\mathcal{F})$, known as the principal component, 
whose general point parametrizes quotients of $\mathcal{F}$ that are abstractly isomorphic to 
$\bigoplus_{i=1}^d \mathcal{O}_{p_i}$, for some distinct points $p_1, \ldots, p_d \in X$.
The dimension of the principal component is $(2+r)d$.
When $r=1$ and $\mathcal{F}= \mathcal{O}_X$,
the Quot scheme coincides with the Hilbert scheme of points $\mathrm{Hilb}^d(X)$.
Given a closed point $z \in \mathrm{Quot}^d_X(\mathcal{F})$ parametrizing a quotient $\mathcal{F} \twoheadrightarrow \mathcal{M}$ with kernel $\mathcal{K}$, the tangent space to $\mathrm{Quot}^d_X(\mathcal{F})$ at the point $z$ is $T_z = \mathrm{Hom}(\mathcal{K},\mathcal{M})$.
We refer to \cite{Sernesi} for more details on Quot schemes.

We consider the following generalization of Conjecture \ref{ConjOP}:

\begin{question}\label{QueQuot}
Let $z \in \mathrm{Quot}^d_X(\mathcal{F})$. 
Is it true that
$
\dim T_z \equiv rd \pmod{2}?
$
\end{question}

In other words, Question \ref{QueQuot} asks whether the dimension of the tangent space to any point on the Quot scheme has the same parity as the dimension of the principal component.
Since $X$ is smooth, 
it suffices to answer Question \ref{QueQuot} in the affine case,
that is,
 when $X = \AA^3$ and $\mathcal{F}= \mathcal{O}_{\AA^3}^{\oplus r}$ is a trivial vector bundle.
 Clearly, Conjecture \ref{ConjOP} is the case $r=1$ of Question \ref{QueQuot}.

From an algebraic perspective,
the Quot scheme $\mathrm{Quot}_{\AA^3}^d(\mathcal{O}^{\oplus r})$ parametrizes finite length quotients of $F$ of length $d$,
where $F$ is a free module of rank $r$ over  $S= \kk[x_1, x_2, x_3]$.
Thus, for the rest of the paper, we work with modules over the polynomial ring.

We begin with a general fact about  
Ext modules and initial submodules with respect to a monomial order.
We refer to \cite{Eisenbud,BCRV} for the theory of Gr\"obner bases for modules.
We denote  by $[\cdot]_j$ the  component of degree $j$ of a graded module.

\begin{lemma}[Consecutive cancellations]\label{LemmaCancellations}
Let $S = \kk[x_1, \ldots, x_n]$ be a polynomial ring, 
equipped
with the standard grading.
Let $F$ be a free graded $S$-module, and  
 $\preceq$ a  monomial order on $F$ which refines the grading.
Let $K \subseteq F$ be a graded submodule such that  $M=F/K$ is a module of finite length.
Let $K' \subseteq F$ be the monomial initial submodule of $K$ with respect to   $\preceq$,
and let $M' = F/K'$.
Then,
there exist integers $\delta_{i,j} \in \N$ such that
$$
\dim_\kk\big[\Ext^i_S(M',M')\big]_j = \dim_\kk\big[\Ext^i_S(M,M)\big]_j + \delta_{i,j}+\delta_{i-1,j}
$$
for all $i = 0, \ldots, n$ and all  $j \in \Z$,
where  $\delta_{n,j} =\delta_{-1,j} = 0$ for all $j$.
\end{lemma}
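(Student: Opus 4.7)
My approach is to interpret the passage from $M$ to $M'$ as a Gr\"obner degeneration and extract the cancellation structure from a universal coefficient argument. I would first pick a weight $w$ with $\mathrm{in}_w K = K'$ and form the standard homogenized submodule $\tilde{K} \subseteq F \otimes_\kk \kk[t]$: the quotient $\tilde{M} := (F \otimes_\kk \kk[t])/\tilde{K}$ is a graded $R := S[t]$-module, $\kk[t]$-flat, $\mathbb{G}_m$-equivariant, and satisfies $\tilde{M}/(t-1)\tilde{M} \cong M$ and $\tilde{M}/(t)\tilde{M} \cong M'$. Because $\preceq$ refines the standard $S$-grading and the two fibers have the same finite-length Hilbert function, every graded component $[\tilde{M}]_j$ is a finitely generated free $\kk[t]$-module of rank $\dim_\kk [M]_j$.

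Next, I would take a graded, finitely generated, free $R$-resolution $\tilde{\mathbb{F}}_\bullet$ of $\tilde{M}$ and form the cochain complex $C^\bullet := \Hom_R(\tilde{\mathbb{F}}_\bullet, \tilde{M})$. Each $C^i$ is a finite direct sum of shifted copies of $\tilde{M}$, so each $[C^i]_j$ is a finitely generated free $\kk[t]$-module. Flatness guarantees that for every $c \in \kk$ the specialization $\tilde{\mathbb{F}}_\bullet \otimes_{\kk[t]} \kk(c)$ is an $S$-free resolution of the fiber $M_c$, so $H^i(C^\bullet \otimes_{\kk[t]} \kk(c)) = \Ext^i_S(M_c, M_c)$. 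The universal coefficient theorem applied degree by degree to the complex $[C^\bullet]_j$ of free $\kk[t]$-modules then yields short exact sequences
$$0 \to [H^i(C^\bullet)]_j \otimes_{\kk[t]} \kk(c) \to [\Ext^i_S(M_c, M_c)]_j \to [\mathrm{Tor}_1^{\kk[t]}(H^{i+1}(C^\bullet), \kk(c))]_j \to 0.$$
The $\mathbb{G}_m$-equivariance of the construction forces the torsion of each $[H^i(C^\bullet)]_j$ as a $\kk[t]$-module to be concentrated at $t = 0$, so at any $c \neq 0$ the $\mathrm{Tor}_1$ term vanishes and the first term has dimension $\mathrm{rank}_{\kk[t]}[H^i(C^\bullet)]_j = \dim_\kk[\Ext^i_S(M, M)]_j$. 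Evaluating the sequence at $c = 0$ and using the elementary identity $\dim_\kk(H \otimes_{\kk[t]} \kk) = \mathrm{rank}_{\kk[t]}(H) + \dim_\kk \mathrm{Tor}_1^{\kk[t]}(H, \kk)$ for finitely generated $\kk[t]$-modules produces
$$\dim_\kk[\Ext^i_S(M', M')]_j = \dim_\kk[\Ext^i_S(M, M)]_j + \dim[\mathrm{Tor}_1(H^i, \kk)]_j + \dim[\mathrm{Tor}_1(H^{i+1}, \kk)]_j,$$
which is the desired identity with $\delta_{i, j} := \dim_\kk[\mathrm{Tor}_1^{\kk[t]}(H^{i+1}(C^\bullet), \kk)]_j \in \N$.

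Finally, I would verify the two boundary vanishings. The equality $\delta_{-1, j} = 0$ follows from $\kk[t]$-flatness of $\tilde{M}$, which forces $H^0(C^\bullet) = \Hom_R(\tilde{M}, \tilde{M})$ to be $\kk[t]$-torsion-free; its graded pieces are then finitely generated and free, and $\mathrm{Tor}_1$ vanishes. For $\delta_{n, j} = 0$, since $S$ has global dimension $n$ we have $\Ext^{n+1}_S(M_c, M_c) = 0$ for every $c$; the universal coefficient sequence then forces $[H^{n+1}(C^\bullet)]_j \otimes_{\kk[t]} \kk(c) = 0$ for all $c$, and Nakayama applied to each finitely generated $\kk[t]$-module $[H^{n+1}]_j$ yields $H^{n+1}(C^\bullet) = 0$. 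The main obstacle in this strategy is mostly bookkeeping: three structures must be tracked simultaneously (the internal $S$-grading, the homological degree, and the $\kk[t]$-module structure coming from the deformation), and one must carefully check that the $\mathbb{G}_m$-action really does concentrate all deformation-theoretic torsion at $t = 0$, so that a single value of $c$ captures the correct ``generic'' fiber of $H^i(C^\bullet)$.
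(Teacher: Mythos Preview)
Your argument is correct and complete. Both you and the paper exploit the same Gr\"obner degeneration from $M$ to $M'$, but the technical machinery differs. The paper chooses a weight $\omega$, builds a \emph{filtered} free resolution $\mathbf{F}_\bullet$ of $M$ whose associated graded is a minimal resolution of $M'$, and runs the spectral sequence of the filtered complex $\Hom(\mathbf{F}_\bullet,M)$: its $E_1$ page computes $\Ext^\ast(M',M')$, its $E_\infty$ page computes $\gr\Ext^\ast(M,M)$, and the $\delta_{i,j}$ arise as the total cancellations between pages. You instead pass to the Rees side, build the flat $\kk[t]$-family $\tilde M$, and extract the $\delta_{i,j}$ as $\dim_\kk \mathrm{Tor}_1^{\kk[t]}(H^{i+1}(C^\bullet),\kk)$ via the universal coefficient sequence; the bigrading (standard degree and $t$-weight) is what forces the torsion of each $[H^i]_j$ to sit over $t=0$, so that $c=1$ already realizes the generic fiber. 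The two pictures are formally equivalent---the spectral sequence of a filtered complex is essentially the UCT for the associated $\kk[t]$-complex---but your packaging has the advantage of giving a closed-form description of $\delta_{i,j}$ as the dimension of a specific module, whereas the paper's $\delta_{i,j}$ is an aggregate over all pages $r\ge 1$. The paper's route, on the other hand, avoids any appeal to $\mathbb{G}_m$-equivariance or to $\kk$ being algebraically closed in the Nakayama step for $\delta_{n,j}=0$, since the filtered resolution can be taken of length $n$ from the outset; you could shortcut your own boundary argument the same way by lifting a length-$n$ resolution of $M'$ to a length-$n$ $R$-resolution of $\tilde M$.
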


\begin{proof}
We use $\deg(\cdot)$ to denote the grading on $S$ and $F$.
Let  $\{\varepsilon_1, \ldots, \varepsilon_r\}$ denote the basis of $F$.
Given a weight  
$\omega = (w_1, \ldots, w_n, v_1, \ldots, v_r ) \in \N_{>0}^{n+r}$,
we can  define another  grading on $S$ and $F$, which we  denote by $\omega(\cdot)$,
by setting $\omega(x_i) = w_i$ and $\omega(\varepsilon_i) = v_i$.
Both the $\deg$-grading and the $\omega$-grading  play an important role in this proof.

The weight $\omega$ defines  increasing filtrations on $S$ and $F$,
and on their submodules and quotients.
We use the symbol $\mathcal{W}_p(\cdot)$ to denote all these filtrations.
For example, we have 
$$\mathcal{W}_p S = \{ f \in S \mid \omega(f) \leq p\}
\quad 
\text{and}
\quad
\mathcal{W}_p F = \{ f \in F \mid \omega(f) \leq p\}
\quad 
\text{for } p \in \Z.
$$ 
Given an $S$-module $N$ with a filtration $\{\mathcal{W}_pN \, \mid \, p \in \Z\}$,
we denote the associated graded module by  $\gr(N) = \oplus_{p \in \Z} \mathcal{W}_p N / \mathcal{W}_{p-1}N$.
Since $S$ and $F$ are already graded with respect to $\omega$, we have $\gr(S) \cong S$ and $\gr(F) \cong F$.

Now, consider the submodule $K \subseteq F$.
Then, there exists a weight $\omega$ such that $\gr(K) = K'$,
hence $\gr(M) = M'$.
This is well-known in the case of ideals, 
see e.g. \cite[Proposition 15.16]{Eisenbud},
but it also true for submodules, see e.g. \cite[Remark 1.5.9]{BCRV}.
For the rest of the proof, we fix such a weight $\omega$.

There exists a complex $\mathbf{F}_\bullet$ of filtered $S$-modules
that is a $\deg$-graded free resolution of $M$, and such that
$\gr(\mathbf{F}_\bullet)$ is a minimal $(\deg,\omega)$-graded free resolution of $M'$.
The existence of such $\mathbf{F}_\bullet$ follows 
from standard arguments about filtered modules and standard bases,
see for example \cite[Proposition 2.3]{BHV}.
Then, 
the complex $\mathbf{L}^\bullet = \Hom(\mathbf{F}_\bullet, M)$ is also  a complex of filtered $S$-modules,
where the filtrations are defined by 
$$
\mathcal{W}_p \mathbf{L}^i = 
\big\{ f \in \mathbf{L}^i = \Hom(\mathbf{F}_i, M) \, \mid \, f(\mathcal{W}_q\mathbf{F}_i) \subseteq \mathcal{W}_{q+p} M \, 
\text{ for all } q\big\},
$$
and we have the isomorphism of $(\deg,\omega)$-graded complexes
$$
\gr(\mathbf{L}^\bullet) = \gr(\Hom(\mathbf{F}_\bullet,M)) \cong \Hom(\gr(\mathbf{F}_\bullet),\gr(M))
= \Hom(\gr(\mathbf{F}_\bullet),M').
$$

We consider the spectral sequence $\mathbf{E}^{p,q}_r$ associated to the filtered complex $\mathbf{L}^\bullet$, see \cite[\href{https://stacks.math.columbia.edu/tag/012K}{Section 12.24}]{stacks}.
It begins with page
$$
\mathbf{E}_0^{p,q} = \gr(\mathbf{L}^{p+q})_p,
$$
where $p$ is the filtration degree and $p+q$ is the total cohomological degree.
Notice that each $\mathbf{E}_0^{p,q}$ is finite dimensional vector space,
and that $\mathbf{E}_0^{p,q} \ne 0$ for finitely many pairs $(p,q)$.

We emphasize, in particular, that 
 $\gr(\mathbf{L}^\bullet)$ is a $\deg$-graded complex, and its differentials are $\deg$-homogeneous (of degree 0).
It follows that all  vector spaces $\mathbf{E}_r^{p,q}$ in the spectral sequence are $\deg$-graded, 
and that
all   differentials 
$
d_r^{p,q} : \mathbf{E}_r^{p,q} \to \mathbf{E}_r^{p-r,q+r+1}
$
are   $\deg$-homogeneous of degree 0.

The first page of the spectral sequence is 
$$
\mathbf{E}_1^{p,q} = H^{p+q}(\gr(\mathbf{L}^\bullet)_p) = H^{p+q}(\Hom(\gr(\mathbf{F}_\bullet),M')_p) = (\Ext^{p+q}(M',M'))_p,
$$
whereas the infinity page is 
$$
\mathbf{E}_\infty^{p,q} = \gr(H^{p+q}(\mathbf{L}^\bullet))_p = 
\gr(H^{p+q}(\Hom(\mathbf{F}_\bullet,M)))_p = 
\gr(\Ext^{p+q}(M,M))_p.
$$
The associated graded modules in the last line arise from the  filtration on $H(\mathbf{L}^\bullet)$ induced form the one of $\mathbf{L}^\bullet$.
We now analyze the cancellations from the page $\mathbf{E}_r$ to the page $\mathbf{E}_{r+1}$, for  each  $r\geq 1$.
The vector spaces in  $\mathbf{E}_{r+1}$ are constructed as cohomology groups of the complexes in $\mathbf{E}_r$
$$
\cdots \xrightarrow{d_r} \mathbf{E}_{r}^{p+r,q-r-1} \xrightarrow{d_r}
\mathbf{E}_{r}^{p,q} \xrightarrow{d_r} \mathbf{E}_r^{p-r,q+r+1} \xrightarrow{d_r}\cdots.
$$
Since the differential  $d_r$ is $\deg$-homogeneous of degree 0 and  increases the cohomological degree by one,
it follows that  $\mathbf{E}_{r+1}$ is obtained from $\mathbf{E}_r$ by canceling $\deg$-graded split-exact complexes $0 \to \kk \to \kk \to 0$,
that is, pairs of copies of $\kk$ concentrated in the same internal degree $\deg(\cdot)$ and in consecutive cohomological degrees.
By combining the cancellations from each page $\mathbf{E}_r$ with $r\geq 1$, the latter statement holds for the cancellations between the page $\mathbf{E}_1$ and the page $\mathbf{E}_\infty$, and we obtain the desired statement.
\end{proof}

We illustrate Lemma \ref{LemmaCancellations} with the following  example.

\begin{example}\label{ExampleCancellations}
Let  $S = \mathbb{C}[x_1, x_3, x_3, x_4]$ and let $F = S \varepsilon_1 \oplus  S \varepsilon_2$ be a free module of rank $2$, with $\deg(\varepsilon_1) = \deg (\varepsilon_2) =0$.
Equip $F$ with the monomial order $\preceq$ defined by 
$$
\mu \varepsilon_i \preceq \nu \varepsilon_j
\quad 
\text{if}
\quad i<j \quad \text{or} \quad i=j \quad \text{and} \quad \mu <_{\text{rev}} \nu,
$$
where $\mu, \nu \in S$ are monomials and $<_{\text{rev}}$ denotes the graded reverse lexicographic order.
Let
$$K = \langle x_1 \varepsilon_1, \, x_2 \varepsilon_1 - x_1 \varepsilon_2, \,
x_3 \varepsilon_1-x_2 \varepsilon_2, \,  x_4 \varepsilon_1 - x_3 \varepsilon_2,\,
x_4 \varepsilon_2\rangle \subseteq F,$$ 
then, $K$ is a graded submodule, and $M = F/K$ is a finite  $S$-module of length 5.
The monomial initial submodule of $K$ is 
$$
K' = \langle x_1 \varepsilon_1, \,
 x_1 \varepsilon_2, \, x_2 \varepsilon_2, \, x_3 \varepsilon_2, \, x_4 \varepsilon_2,\,
  x_2^2 \varepsilon_1,\, x_2x_3 \varepsilon_1,\, x_3^2 \varepsilon_1,\,
  x_2x_4 \varepsilon_1,\, x_3x_4 \varepsilon_1,\, x_4^2 \varepsilon_1  
  \rangle \subseteq F.
  $$
We compute the dimension tables of the Ext modules:

\begin{multicols}{2}
\begin{center}
\begin{tabular}{|cc|c|c|c|c|c|}
\hline
& $i$ & 0 & 1 & 2 & 3 & 4\\
$j$ &  &  & & & & \\
 \hline
$-5$ &  &  & & &  & 6\\
 \hline
$-4$ &  &  & & & 12 & 1 \\
 \hline
$-3$ &  &  &  &  & 10 &  \\
 \hline
$-2$ &  &  &  & 30 &  & \\
 \hline
$-1$ &  &  & 10 & & & \\
 \hline
$0$ &  & 1 & 12 & & & \\
 \hline
$1$ &  & 6 & & & & \\
\hline
\end{tabular}
$$
\dim_\kk \left[ \Ext^i_S(M,M)\right]_j
$$

\end{center}

\columnbreak

\begin{center}
\begin{tabular}{|cc|c|c|c|c|c|}
\hline
& $i$ & 0 & 1 & 2 & 3 & 4 \\
$j$ &  &  & & & & \\
 \hline
$-5$ &  &  & & &  & 6\\
 \hline
$-4$ &  &  & & & 14 & 3\\
 \hline
$-3$ &  &  &  & {14} & {24} & \\
 \hline
$-2$ &  &  & {6} & {42} & 6 &  \\
 \hline
$-1$ &  &  & {24} & {14} & & \\
 \hline
$0$ &  & {3} & {14} & & & \\
 \hline
$1$ &  & 6 & & & & \\
\hline
\end{tabular}
$$
\dim_\kk \left[ \Ext^i_S(M',M')\right]_j
$$
\end{center}

\end{multicols}
\noindent
and determine the consecutive cancellations of Lemma \ref{LemmaCancellations}:
$$
{\delta_{0,0}} = 2,\quad {\delta_{1,-1}}=14,\quad {\delta_{1,-2}}=6,\quad \delta_{2,-2}=6, \quad \delta_{2,-3}=14, \quad {\delta_{4,-4}} = 2.
$$
\end{example}

\begin{remark}
Related ``consecutive cancellation" phenomena occur  for Betti tables
\cite{Peeva,RossiSharifan} 
and for Tor modules \cite{Sammartano}.
\end{remark}

Next, we need the following duality statement for Ext modules.

\begin{lemma}[Duality]\label{LemmaDuality}
Let $S = \kk[x_1, \ldots, x_n]$  and let $M, N$ be two graded $S$-modules of finite length.
Then, we have an isomorphism of $\mathbb{Z}$-graded vector spaces
$$
\Ext^i_S(M,N)^\vee \cong \Ext^{n-i}_S(N,M)(-n)
$$
where $-^\vee$ denotes the $\kk$-dual.
\end{lemma}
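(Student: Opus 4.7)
The plan is to interpolate between the two Ext modules through a Tor module, using two different dualities: tensor-Hom adjunction over $\kk$, and graded local duality for the Gorenstein ring $S$. First, since $N$ has finite length, the identification $N \cong \Hom_\kk(N^\vee, \kk)$ combined with tensor-Hom adjunction over $\kk$ yields
$$
\Hom_S(M, N) \;\cong\; \Hom_\kk(M \otimes_S N^\vee, \kk) \;=\; (M \otimes_S N^\vee)^\vee.
$$
Choosing a graded free resolution $F_\bullet \to M$ and applying $\Hom_S(-, N)$ and $- \otimes_S N^\vee$ termwise, the two resulting cochain complexes are Matlis-dual to one another (with cohomological indexing reversed), so taking cohomology yields
$$
\Ext^i_S(M, N)^\vee \;\cong\; \mathrm{Tor}^S_i(M, N^\vee) \quad \text{for all } i \geq 0. \qquad (\star)
$$
No property of $S$ is used here beyond $\kk$-linear adjunction and Matlis reflexivity of the finite length module $N$.

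The next step is to convert $\mathrm{Tor}^S_i(M, N^\vee)$ back into an Ext module by exploiting that $S$ is Gorenstein of dimension $n$ with $\omega_S = S(-n)$. Take a minimal graded free resolution $E_\bullet \to N$, which has length equal to $n$ since $N$ has finite length and $S$ is regular. Graded local duality gives $\Ext^n_S(N, S) \cong N^\vee(n)$ and $\Ext^j_S(N, S) = 0$ for $j \neq n$, so the dualized complex $\Hom_S(E_\bullet, S)$, read in reverse, is a minimal graded free resolution of $N^\vee(n)$, and hence a free resolution of $N^\vee$ after an overall internal twist by $-n$. Computing $\mathrm{Tor}^S_i(M, N^\vee)$ via this dual resolution, and invoking the natural isomorphism $\Hom_S(E_j, S) \otimes_S M \cong \Hom_S(E_j, M)$ for free $E_j$, identifies
$$
\mathrm{Tor}^S_i(M, N^\vee) \;\cong\; \Ext^{n-i}_S(N, M)(-n). \qquad (\star\star)
$$

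Combining $(\star)$ and $(\star\star)$ proves the lemma. The main delicate point is the graded bookkeeping in the second step: the twist by $-n$ appearing in the target statement originates from the canonical module $\omega_S = S(-n)$, and it must be tracked carefully through the conversion from Tor back to Ext. Otherwise the argument is essentially formal, combining Matlis duality over $\kk$ with Gorenstein duality over $S$; the finite length hypothesis on both $M$ and $N$ is used in an essential way, for Matlis reflexivity in the first step and for finite projective dimension (equal to $n$) of the resolved module in the second.
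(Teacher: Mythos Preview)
Your argument is correct. The two-step passage through $\mathrm{Tor}^S_i(M,N^\vee)$ --- first via tensor-Hom adjunction over $\kk$, then via dualizing a length-$n$ free resolution of $N$ using $\Ext^n_S(N,S)\cong N^\vee(n)$ --- is the standard route, and your graded bookkeeping for the twist by $-n$ is accurate.

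There is nothing to compare against here: the paper does not supply its own proof of this lemma, but simply refers to \cite[Lemma~2.2]{RS}. Your write-up therefore fills in exactly what the paper omits. One minor stylistic remark: the parenthetical ``(with cohomological indexing reversed)'' in step $(\star)$ could be misread; what you mean is that dualizing the chain complex $F_\bullet\otimes_S N^\vee$ produces a cochain complex isomorphic to $\Hom_S(F_\bullet,N)$, with $H^i$ on one side matching $(H_i)^\vee$ on the other. The mathematics is fine.
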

\begin{proof}
This can be proved as in \cite[Lemma 2.2]{RS}.
\end{proof}

\begin{notation}
For the rest of the paper, 
we introduce the following  notation:
$$
hom(\cdot, \cdot) := \dim_\kk \Hom(\cdot, \cdot)
\quad \text{and}\quad
ext^i(\cdot, \cdot) := \dim_\kk \Ext^i(\cdot, \cdot).
$$
\end{notation}

We observe another parity phenomenon for finite subschemes of $\AA^3$.

\begin{lemma}\label{LemmaHomIJ}
Let  $S = \kk[x_1, x_2, x_3]$
and $I,J \subseteq S$ be ideals of finite colength.
Then,
$$
hom(I,S/J) + hom(J,S/I) \equiv \dim_\kk(S/I)+\dim_\kk(S,J) \pmod{2}.
$$
\end{lemma}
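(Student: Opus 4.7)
The plan is to reduce the claim, via short exact sequences and the duality of \Cref{LemmaDuality}, to the evenness of $\sum_{i=0}^{3} ext^i(S/I, S/J)$, which in turn follows from an Euler-characteristic calculation.

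First, I would apply $\Hom(-, S/J)$ to the short exact sequence $0 \to I \to S \to S/I \to 0$ and use $\Ext^{\geq 1}(S, -) = 0$ to obtain
$$
hom(I, S/J) = \dim_\kk(S/J) - hom(S/I, S/J) + ext^1(S/I, S/J),
$$
together with the symmetric identity for $hom(J, S/I)$ obtained by swapping the roles of $I$ and $J$. Adding the two and reducing modulo $2$ transforms the desired congruence into
$$
hom(S/I, S/J) + hom(S/J, S/I) + ext^1(S/I, S/J) + ext^1(S/J, S/I) \equiv 0 \pmod{2}.
$$

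Next, I would invoke \Cref{LemmaDuality} with $n=3$ to get the identity $ext^i(S/I, S/J) = ext^{3-i}(S/J, S/I)$ for every $i$. In particular, $hom(S/I, S/J) = ext^3(S/J, S/I)$ and $ext^1(S/I, S/J) = ext^2(S/J, S/I)$, so the left-hand side of the displayed congruence collapses to $\sum_{i=0}^{3} ext^i(S/J, S/I)$. It thus suffices to prove this total sum is even.

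For the last step, I would compute the Ext-Euler characteristic via any finite free resolution $F_\bullet \to S/J$, which exists and has length at most $3$ since $S$ is regular. Because $\Hom(F_i, S/I)$ is a direct sum of $\mathrm{rank}(F_i)$ copies of $S/I$, it has $\kk$-dimension $\mathrm{rank}(F_i)\cdot \dim_\kk(S/I)$, so the Euler characteristic of the Hom complex, which equals that of its cohomology, is
$$
\sum_{i=0}^{3} (-1)^i ext^i(S/J, S/I) = \dim_\kk(S/I) \cdot \sum_{i=0}^{3} (-1)^i \mathrm{rank}(F_i) = \dim_\kk(S/I)\cdot\mathrm{rank}(S/J) = 0,
$$
since $S/J$ is torsion. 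This forces $ext^0 + ext^2 = ext^1 + ext^3$, and hence $\sum_{i=0}^{3} ext^i(S/J, S/I) = 2(ext^0 + ext^2)$ is even, completing the proof.

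I expect the only real obstacle to be careful bookkeeping of signs and indices in the reduction; the Euler-characteristic vanishing is an immediate consequence of $S/J$ having rank zero together with the finiteness of $F_\bullet$. A minor subtlety is that \Cref{LemmaDuality} is stated for graded modules while here $I,J$ need not be homogeneous, but the same Matlis/local duality argument gives the required identity of total $\kk$-dimensions in the ungraded case as well.
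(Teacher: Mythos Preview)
Your proof is correct and follows essentially the same route as the paper: both use the long exact sequence for $\Hom(-,S/J)$ applied to $0\to I\to S\to S/I\to 0$ and its symmetric counterpart, then apply \Cref{LemmaDuality} to reduce the question to the vanishing of $\sum_i(-1)^i ext^i(S/J,S/I)$. Your version is slightly more explicit in justifying this Euler-characteristic vanishing via a free resolution, and you rightly flag the graded-versus-ungraded issue in invoking \Cref{LemmaDuality}; the paper applies the lemma without comment, though in its application the ideals are monomial and hence homogeneous.
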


\begin{proof}
From the  exact sequences
$$
0 \to \Hom(S/I,S/J) \to \Hom(S,S/J)  \to \Hom(I, S/J) \to \Ext^1(S/I,S/J) \to 0
$$
and
$$
0 \to \Hom(S/J,S/I) \to \Hom(S,S/I)  \to \Hom(J, S/I) \to \Ext^1(S/J,S/I) \to 0
$$
we obtain the equation
\begin{multline*}\label{EqTwoExactSeq}
hom(I,S/J)+hom(J,S/I) - hom(S,S/J) - hom(S,S/I) 
=
\\
ext^1(S/I,S/J) + ext^1(S/J,S/I) 
-hom(S/I,S/J)-hom(S/J,S/I).
\end{multline*}
Now, the desired  conclusion 
follows, since 
\begin{align*}
& \quad hom(I,S/J)+hom(J,S/I) - \dim_\kk(S/J) - \dim_\kk(S/I) 
\\
 = & \quad
hom(I,S/J)+hom(J,S/I) - hom(S,S/J) - hom(S,S/I)  
\\
= & \quad
ext^1(S/I,S/J) + ext^1(S/J,S/I) 
-hom(S/I,S/J)-hom(S/J,S/I)
\\
= & \quad ext^2(S/J,S/I) + ext^1(S/J,S/I) 
-ext^3(S/J,S/I)-ext^0(S/J,S/I) && \text{ by Lemma \ref{LemmaDuality}}
\\
\equiv & \quad ext^2(S/J,S/I) - ext^1(S/J,S/I) 
-ext^3(S/J,S/I)+ext^0(S/J,S/I) 
\\
= & \quad 0 \pmod{2},
\end{align*}
where the last equation is  the vanishing of the Euler characteristic 
$
\sum_i (-1)^i ext^i(S/J,S/I).
$
\end{proof}

As a consequence,
 Question \ref{QueQuot} has an affirmative answer in the case of  monomial submodules.
Recall that a submodule of a free module is called \emph{monomial} if it is a direct sum of monomial ideals. 

\begin{cor}\label{CorollaryMonomialCase}
Let  $S = \kk[x_1, x_2, x_3]$
and $F$ a free $S$-module of rank $r$.
Let $M = F/K$ be an $S$-module of finite length $d$.
If $K\subseteq F$ is a monomial submodule, then 
$hom(K,M) \equiv rd \pmod{2}.$
\end{cor}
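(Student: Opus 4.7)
The plan is to reduce the rank-$r$ monomial case to the rank-one monomial case (which was settled by Maulik--Nekrasov--Okounkov--Pandharipande) together with the cross-term identity provided by Lemma \ref{LemmaHomIJ}. First I would fix a basis $\varepsilon_1, \ldots, \varepsilon_r$ of $F$ and use the hypothesis that $K$ is a monomial submodule to write
$$
K = \bigoplus_{i=1}^r I_i\varepsilon_i, \qquad M = F/K = \bigoplus_{i=1}^r (S/I_i)\varepsilon_i,
$$
for monomial ideals $I_i \subseteq S$ of finite colength $d_i := \dim_\kk(S/I_i)$, with $d = \sum_{i} d_i$. The additivity of $\Hom$ with respect to direct sums then gives
$$
hom(K,M) = \sum_{i,j=1}^r hom(I_i, S/I_j).
$$

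Next I would split the double sum into diagonal and off-diagonal contributions,
$$
hom(K,M) = \sum_{i=1}^r hom(I_i, S/I_i) + \sum_{1 \leq i < j \leq r} \bigl(hom(I_i, S/I_j) + hom(I_j, S/I_i)\bigr),
$$
and handle each piece modulo $2$ with a different tool. For each off-diagonal pair, Lemma \ref{LemmaHomIJ} applied to $I=I_i$ and $J=I_j$ gives $hom(I_i,S/I_j) + hom(I_j,S/I_i) \equiv d_i + d_j \pmod 2$. Summing over $1 \leq i < j \leq r$ and noting that each $d_k$ occurs in exactly $r-1$ such pairs, the off-diagonal contribution is congruent to $(r-1)d \pmod 2$.

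For the diagonal, each summand $hom(I_i, S/I_i)$ is the tangent space dimension of $\Hilb^{d_i}(\AA^3)$ at the monomial ideal $I_i$, so the monomial case of Conjecture \ref{ConjOP} proved in \cite[Theorem 2]{MNOP} (equivalently, the $r=1$ case of our statement) yields $hom(I_i, S/I_i) \equiv d_i \pmod 2$, whence $\sum_i hom(I_i,S/I_i) \equiv d \pmod 2$. Combining the two parts, $hom(K,M) \equiv d + (r-1)d = rd \pmod 2$, which is the claim. I do not anticipate a genuine obstacle here: both ingredients have been put in place earlier in the paper, and only elementary book-keeping is needed for the off-diagonal counting. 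The role of Lemma \ref{LemmaHomIJ} is precisely to promote the known rank-one monomial parity result to arbitrary rank.
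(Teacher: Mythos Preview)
Your proof is correct and follows essentially the same approach as the paper: decompose $K$ and $M$ along the basis, split $hom(K,M)=\sum_{i,j} hom(I_i,S/I_j)$ into diagonal and off-diagonal parts, and handle them via the MNOP monomial case and Lemma~\ref{LemmaHomIJ} respectively. The only cosmetic difference is that the paper sums $\sum_i d_i + \sum_{i<j}(d_i+d_j)=\sum_i r d_i = rd$ in one line rather than separating it into $d+(r-1)d$.
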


\begin{proof}
Let
$F = \bigoplus_{i=1}^r S \varepsilon_i$ be the free module.
Then, $K = \bigoplus_{i=1}^r I_i \varepsilon_i$, where each $I_i \subseteq S$ is a monomial ideal of finite colength. 
Let $d_i = \dim_\kk(S/I_i)$, then we have $d = \sum_{i=1}^r d_i$.
We have
\begin{align*}
hom(K,M) &= hom\left(\bigoplus_{i=1}^r I_i \,, \,\bigoplus_{j=1}^r S/I_j\right)
= \sum_{i,j=1}^r hom\left(I_i\, , S/I_j\right)\\
&=
\sum_{i=1}^r  hom(I_i,S/I_i) + \sum_{i<j}\big(  hom(I_i,S/I_j) + hom(I_j,S/I_i)\big)\\
& \equiv \sum_{i=1}^r d_i + \sum_{i<j}( d_i+d_j) = \sum_{i=1}^r rd_i = rd \pmod{2},
\end{align*}
where the congruence holds by the monomial case of Conjecture \ref{ConjOP} \cite{MNOP} and by  Lemma \ref{LemmaHomIJ}.
\end{proof}

We are ready to prove the main theorem of this paper,
namely, we settle Question \ref{QueQuot} for points $z \in \mathrm{Quot}_{\AA^3}^d(\mathcal{O}^{\oplus r})$
that parametrize \emph{graded} $S$-modules $M$.

\begin{thm} \label{TheoremParityModules}
Let  $S = \kk[x_1, x_2, x_3]$
and $F$ a free graded $S$-module of rank $r$.
Let $M = F/K$ be a graded $S$-module of finite length $d$.
Then,
$$hom(K,M) \equiv rd \pmod{2}.$$
\end{thm}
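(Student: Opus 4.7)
The plan is to reduce to the monomial case of Corollary \ref{CorollaryMonomialCase} via a Gr\"obner degeneration, and to control the change in $hom(K, M)$ using the consecutive cancellation lemma together with graded duality. Fix a monomial order $\preceq$ on $F$ that refines the standard grading, and let $K' = \mathrm{in}_\preceq(K)$ be the monomial initial submodule. Since $\preceq$ refines the grading, the quotient $M' = F/K'$ is graded with the same Hilbert function as $M$; in particular $\dim_\kk M' = d$. By Corollary \ref{CorollaryMonomialCase} we have $hom(K', M') \equiv rd \pmod 2$, so it suffices to show $hom(K, M) \equiv hom(K', M') \pmod 2$.

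To pass from $hom$ to Ext modules, apply $\Hom(-, M)$ to the short exact sequence $0 \to K \to F \to M \to 0$. Since $F$ is free, $\Ext^1(F, M) = 0$, and the resulting exact sequence gives
$$
hom(K, M) = hom(F, M) - hom(M, M) + ext^1(M, M) = rd - ext^0(M, M) + ext^1(M, M),
$$
and likewise for $M'$. Writing $\Delta_i := \sum_j \delta_{i, j}$ and summing the formula of Lemma \ref{LemmaCancellations} over $j$ gives $ext^i(M', M') - ext^i(M, M) = \Delta_i + \Delta_{i-1}$. Combining these identities with the boundary $\Delta_{-1} = 0$ yields
$$
hom(K', M') - hom(K, M) = \Delta_1,
$$
so the problem reduces to showing that $\Delta_1$ is even.

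The crux of the argument is to use graded duality to establish the degree-wise symmetry
$$
\delta_{1, j} = \delta_{1, -j-3} \qquad \text{for every } j \in \Z.
$$
By Lemma \ref{LemmaDuality} applied to both $M$ and $M'$, the difference $d_{i, j} := \dim_\kk[\Ext^i(M', M')]_j - \dim_\kk[\Ext^i(M, M)]_j$ satisfies $d_{i, j} = d_{3-i, -j-3}$. Combined with the identity $d_{i, j} = \delta_{i, j} + \delta_{i-1, j}$ from Lemma \ref{LemmaCancellations} and the boundary conditions $\delta_{-1, j} = \delta_{3, j} = 0$, the case $i = 0$ yields $\delta_{0, j} = \delta_{2, -j-3}$, and substituting this into the case $i = 1$ gives $\delta_{1, j} = \delta_{1, -j-3}$. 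Finally, since the involution $j \mapsto -j - 3$ on $\Z$ has no fixed points (the equation $2j = -3$ has no integer solution), $\Delta_1 = \sum_j \delta_{1, j}$ decomposes into pairs of equal terms and is therefore even.

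The main obstacle will be the symmetry step, and this is precisely where the argument is special to threefolds: on $\AA^n$ with $n$ even, the analogous involution $j \mapsto -j - n$ has a fixed integer, and the parity of $\Delta_1$ is no longer forced.
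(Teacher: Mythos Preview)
Your proof is correct and follows essentially the same approach as the paper: degenerate to the monomial initial submodule, express $hom(K',M')-hom(K,M)$ as $\sum_j \delta_{1,j}$ via the long exact sequence and Lemma~\ref{LemmaCancellations}, and then use the duality Lemma~\ref{LemmaDuality} together with the boundary conditions to obtain the symmetry $\delta_{1,j}=\delta_{1,-j-3}$, whence the sum is even. Your phrasing of the final step (the involution $j\mapsto -j-3$ has no integer fixed point) is equivalent to the paper's observation that $\sum_{j\text{ even}}\delta_{1,j}=\sum_{j\text{ odd}}\delta_{1,j}$.
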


\begin{proof} 
Let $K'\subseteq F$ be the monomial initial submodule with respect to a monomial order $\preceq$ on $F$ that refines the grading, and let $M'=F/K'$.
We denote $E_i = \Ext_S^i(M,M)$ and $E'_i = \Ext_S^i(M',M')$,
for $i = 0, 1, 2, 3$.
Observe that Lemma \ref{LemmaDuality} implies  that $\dim_\kk [E_i]_j = \dim_\kk [E_{3-i}]_{-j-3}$ for all $i,j$, 
and similarly for $E'_i$.

Recall that the tangent space to the point parametrizing $M$, respectively $M'$,
is $\Hom(K,M)$, respectively $\Hom(K',M')$.\footnote{
We warn the reader that, unlike the case of $\Hilb^d(\AA^3)$,
the tangent space $\Hom(K,M)$ is typically \emph{not} isomorphic to $\Ext^1(M,M)$ or $\Ext^1(K,K)$.}
From the exact sequence 
$$
0 \to \Hom(M,M) \to \Hom(F,M) \to \Hom(K,M) \to \Ext^1(M,M) \to 0
$$
we obtain the equation 
$$
hom(K,M) = ext^1(M,M) + hom(F,M) - hom(M,M)   =  ext^1(M,M) + rd - ext^0(M,M).
$$
Likewise, we have 
$
hom(K',M') =  ext^1(M',M') + rd - ext^0(M',M').  
$
Applying Lemma \ref{LemmaCancellations},
we obtain
\begin{align*}
hom(K',M')-hom(K,M) &= \big(ext^1(M',M')-ext^1(M,M)\big)  - \big(ext^0(M',M')-ext^0(M,M)\big)\\
&= \sum_j (\delta_{1,j} +\delta_{0,j}) - \sum_j \delta_{0,j}\\
&= \sum_j \delta_{1,j}.
\end{align*}

Applying both Lemmas \ref{LemmaCancellations} and   \ref{LemmaDuality}, we obtain for all $j$
\begin{equation}\label{EqDelta20}
\delta_{2,j} = \dim_\kk [E'_3]_j - \dim_\kk [E_3]_j = \dim_\kk [E'_0]_{-j-3} - \dim_\kk [E_0]_{-j-3} = \delta_{0,-j-3}.
\end{equation}
Moreover, we have
\begin{align*}
\delta_{1,j} &= \dim_\kk [E'_1]_j - \dim_\kk [E_1]_j - \delta_{0,j}  && \text{by Lemma \ref{LemmaCancellations}}
\\
&= \dim_\kk [E'_2]_{-j-3} - \dim_\kk [E_2]_{-j-3} - \delta_{0,j} && \text{by Lemma \ref{LemmaDuality}}
\\
&= \dim_\kk [E'_2]_{-j-3} - \dim_\kk [E_2]_{-j-3} - \delta_{2,-j-3} && \text{by \eqref{EqDelta20} }
\\
&= 
\delta_{1,-j-3} && \text{by Lemma \ref{LemmaCancellations}}.
\end{align*}
In particular, we have  
$\sum_{j \text{ even}} \delta_{1,j} = \sum_{j \text{ odd}} \delta_{1,j}$,
so $\sum_{j } \delta_{1,j}$ is an even integer.

Finally, by Corollary \ref{CorollaryMonomialCase} we have
$$
hom(K,M) = hom(K',M') - \sum_{j } \delta_{1,j}  \equiv hom(K',M') \equiv rd \pmod{2},
$$
and this concludes the proof.
\end{proof}

\begin{remark}\label{RemarkGradings}
The same argument settles Question \ref{QueQuot} in a slightly greater generality,
namely, for modules $M$ that are graded with respect  to a grading of $S$ taking values in a torsion-free abelian group  such that $\deg(x_1)+\deg(x_2)+\deg(x_3)$ is not divisible by 2.
For example, one can take the $\Z$-grading defined by $\deg(x_1) = 4, \deg(x_2) = 5, \deg(x_3) = 6$, 
or the $\Z^2$-grading defined by $\deg(x_1) = (1,0)$, and $\deg(x_2) =  \deg(x_3) = (0,1)$.
It turns out that this condition is sharp, 
as the following example of Giovenzana-Giovenzana-Graffeo-Lella shows.
\end{remark}

\begin{example}\label{ExampleCancellationsCounterexample}
We analyze the counterexample of \cite{GGGL} from the point of view of the proof of Theorem \ref{TheoremParityModules}.
Let $S = \mathbb{C}[x_1,x_2,x_3]$ and 
let $I = \big( x_1 + (x_2,x_3)^2)\big)^2 + \big(x_1x_2-x_3^3\big) \subseteq S$.
Then, $S/I$ has length 12, but the corresponding point $z \in \Hilb^{12}(\AA^3)$ has tangent space of dimension 45.
The ideal $I$ is homogeneous with respect to the grading $\deg(x_1) = 2, \deg(x_2)=\deg(x_3) =1$.
The monomial ideal $I' = \big( x_1 + (x_2,x_3)^2)\big)^2 + \big(x_1x_2\big)$ is 
the initial ideal of $I$ with respect to the graded lexicographic monomial order in $S$,
and the corresponding point $z' \in \Hilb^{12}(\AA^3)$ has tangent space of dimension 48.

Letting $M = S/I, M'=S/I'$, 
we compute the dimension tables of the Ext modules

\begin{multicols}{2}
\begin{center}
\begin{tabular}{|cc|c|c|c|c|}
\hline
& $i$ & 0 & 1 & 2 & 3\\
$j$ &  &  & & &  \\
 \hline
$-7$ &  &  & & & 5 \\
 \hline
$-6$ &  &  & & & 4 \\
 \hline
$-5$ &  &  & & & 2 \\
 \hline
$-4$ &  &  & & 5 & 1 \\
 \hline
$-3$ &  &  &  & 39 &   \\
 \hline
$-2$ &  &  & 1 & 1 &  \\
 \hline
$-1$ &  &  & 39 & &  \\
 \hline
$0$ &  & 1 & 5 & &  \\
 \hline
$1$ &  & 2 & & &  \\
\hline 
$2$ &  & 4 & & &  \\
\hline 
$3$ &  & 5 & & &  \\
\hline
\end{tabular}
$$
\dim_\kk \left[ \Ext^i_S(M,M)\right]_j
$$

\end{center}

\columnbreak

\begin{center}
\begin{tabular}{|cc|c|c|c|c|}
\hline
& $i$ & 0 & 1 & 2 & 3\\
$j$ &  &  & & &  \\
 \hline
$-7$ &  &  & & & 5 \\
 \hline
$-6$ &  &  & & & 4 \\
 \hline
$-5$ &  &  & & & 2 \\
 \hline
$-4$ &  &  & & 5 & 1 \\
 \hline
$-3$ &  &  &  & 39 &  \\
 \hline
$-2$ &  &  & {4} & {4} &  \\
 \hline
$-1$ &  &  & 39 & &  \\
 \hline
$0$ &  & 1 & 5 & &  \\
 \hline
$1$ &  & 2 & & &  \\
\hline 
$2$ &  & 4 & & &  \\
\hline 
$3$ &  & 5 & & &  \\
\hline
\end{tabular}

$$
\dim_\kk \left[ \Ext^i_S(M',M')\right]_j
$$
\end{center}

\end{multicols}
\noindent
and find that the only consecutive cancellation of Lemma \ref{LemmaCancellations} is ${\delta_{1,-2}}=3$.
In terms of the pairing established in the proof of Theorem \ref{TheoremParityModules}, 
since $\deg(x_1)+\deg(x_2)+\deg(x_3) = 4$,
we see that $\delta_{1,-2} $ is paired with itself, because, for $j = -2$, we have $-j-4 = j$.
Thus, the difference $\dim T_{z'}-\dim T_{z} = \delta_{1,-2}$ is odd.
\end{example}

\begin{remark}\label{RemarkExamplesHilbN}
We point out that no analogue of Conjecture \ref{ConjOP}  holds for $\Hilb^d(\AA^n)$ when $n \geq 4$.
Indeed,
for sufficiently large $d$, the tangent spaces to $\Hilb^d(\AA^n)$
 attain nearly all dimensions in a large interval of integers.
For example, a computation with \cite{M2} shows that 
 the dimensions of tangent spaces to points  in $\Hilb^{10}(\AA^4)$
 parametrizing  monomial ideals are 
$$
40,\, 41,\, 46,\, 48,\, 49,\, 50,\, 51,\, 52,\, 53,\, 55,\, 56,\, 57,\, 60,\, 61,\, 65,\, 66,\, 69,\, 70,\, 76,\, 80.
$$
Similarly,
the dimensions of tangent spaces to 
points  in $\Hilb^{10}(\AA^5)$
 parametrizing 
 monomial ideals are 
$$
50,\, 51,\, 56,\, 58,\, 59,\, 60,\, 61,\, 62,\, 63,\, 65,\, 66,\, 67,\, 68,\, 70,\, 71,\, 73,
$$
$$
 75,\, 76,\, 78,\, 79,\, 80,\, 81,\, 82,\, 83,\, 85,\, 86,\, 90,\, 95,\, 98,\, 101,\, 104.
$$
\end{remark}

Clearly, a   phenomenon analogous to Remark \ref{RemarkExamplesHilbN}
also occurs  for Quot schemes of $\AA^n$ with $n\geq 4$.
However, surprisingly, it also occurs for $n=2$.
While the Hilbert scheme $\Hilb^d(\AA^2)$ is smooth and irreducible of dimension $2d$, 
the Quot scheme 
 $\mathrm{Quot}^{d}_{\AA^2}(\mathcal{O}^{\oplus r})$ 
 is also irreducible, but usually  singular, if $r \geq 2$ \cite{EL,GL}. 
The tangent spaces to  $\mathrm{Quot}^{d}_{\AA^2}(\mathcal{O}^{\oplus r})$ 
 attain nearly all dimensions in an  interval of integers.

\begin{example}\label{ExampleQuotA2}
The dimensions of tangent spaces to 
points  in $\mathrm{Quot}^{20}_{\AA^2}(\mathcal{O}^{\oplus 2})$ 
 parametrizing 
 monomial submodules are 
$$
60,\, 62,\, 63,\, 64,\, 65,\, 66,\, 67,\,  68,\, 69,\, 70,\, 71,\, 72,\, 73,\, 74,\, 75,\, 76,\, 77,\, 78,\, 79,\, 80.
$$
\end{example}

\section*{Acknowledgments}
The authors would like to thank Rahul Pandharipande for some helpful correspondence.
Some computations were performed with the software Macaulay2 \cite{M2}.

\section*{Funding}
Part of this work was done at the Institute of Mathematics of the Polish Academy of Sciences, during the Thematic Research Programme 
``Tensors: geometry, complexity and quantum entanglement'',
with support from University of Warsaw, Excellence Initiative – Research University and the Simons Foundation Award No. 663281.
Alessio Sammartano was partially supported by 
 PRIN 2020355B8Y “Squarefree Gr\"obner degenerations, special varieties and related topics”.

\end{document}